\documentclass[a4paper, 11pt]{amsart}

\usepackage{amsmath,amssymb,enumerate,comment,amsthm,mathrsfs,mathtools}
 \usepackage{amsfonts}
 
 \usepackage{microtype}

\usepackage{amscd}
\usepackage{array}

\newtheorem{Thm}{Theorem}[section]

\newtheorem{Lem}[Thm]{Lemma}

\newtheorem{Cor}[Thm]{Corollary}

\theoremstyle{definition}

\newtheorem{Rem}[Thm]{Remark}

\newcommand{\Cs}{C$^\ast$}

\newcommand{\rca}[1]{\mathop{\rtimes _{#1, {\mathrm r}}}}

\newcommand{\Sub}[1]{\mathop{\mathrm{Sub}}({#1})}
\newcommand{\Prob}{\mathrm{Prob}}

\newcommand{\IF}{\mathbb F}

\newcommand{\IN}{\mathbb N}
\newcommand{\IR}{\mathbb R}

\newcommand{\IZ}{\mathbb Z}
\newcommand{\cB}{\mathcal B}

\newcommand{\cH}{\mathcal H}

\newcommand{\cA}{\mathcal A}
\newcommand{\cO}{\mathcal O}

\newcommand{\cS}{\mathcal S}

\newcommand{\cW}{\mathcal W}

\newcommand{\acts}{\curvearrowright}

\DeclareMathOperator{\dist}{dist}

\counterwithout{equation}{section}
\title[Free groups amenably act on unital simple AF-algebras]{Free groups amenably act on unital simple AF-algebras}
\author{Yuhei Suzuki}
\subjclass[2020]{Primary~
46L55, Secondary~37B05, 37B10, 46L35}
\keywords{Amenable actions, AF-algebras, subshifts, free groups.}

\address{Yuhei Suzuki, Department of Mathematics, Faculty of Science, Hokkaido University,
Kita 10, Nishi 8, Kita-Ku, Sapporo, Hokkaido, 060-0810, Japan}
\email{yuhei@math.sci.hokudai.ac.jp}

\begin{document}

\begin{abstract}
We show that free groups admit amenable actions on unital simple AF-algebras.
This settles, for free groups, the last remaining case of the existence problem for amenable actions of non-amenable groups on classifiable simple \Cs-algebras.
The proof relies on the Baire category theorem.
This result was obtained with substantial assistance from GPT-6 Astra.
\end{abstract}
\maketitle
\section{Introduction}
The study of amenable actions on \Cs-algebras was pioneered by Anantharaman-Delaroche \cite{AD, ADT}
and has recently attracted renewed attention following the discovery of remarkable examples, most notably amenable actions (of non-amenable groups) on simple \Cs-algebras \cite{Suzeq}.
A highlight of these developments is the establishment of the dynamical Kirchberg--Phillips theorem by Gabe and Szab{\'o} \cite{GS1, GS2}.
In light of this achievement, it is natural and important to extend the dynamical classification theorem to a broader class of simple \Cs-dynamical systems.
Broadly speaking, classifiable simple \Cs-algebras fall into the following six classes.
\begin{enumerate}
\item Unital simple stably finite \Cs-algebras.
\item Unital simple purely infinite \Cs-algebras.
\item Non-unital simple stably finite \Cs-algebras $A$ with $\mathrm{K}_0(A)_+\neq \{0\}$.
\item Stable simple purely infinite \Cs-algebras.
\item Simple stably projectionless \Cs-algebras with a tracial state.
\item Simple stably projectionless \Cs-algebras without bounded tracial weights.
\end{enumerate}
We refer the reader to the introduction of \cite{CGS} and the survey article \cite{GLN} for the classification theory of simple \Cs-algebras.

While it was already known that amenable actions of non-amenable groups are abundant in four of these classes, namely (2)--(4) and (6)
(see \cite{Suzeq, OS, GS2, Suzsf, SuzP}), the existence of such actions in the remaining two classes has remained open for eight years since the problem was first raised in \cite{Suzeq}.
The difficulty is that, for algebras in these classes, it is hard to make an action amenable without destroying the required structural properties of the algebras.
Note that an example in class (1) yields one in (5) simply by tensoring it with a \Cs-algebra such as the Razak--Jacelon algebra $\cW$ \cite{J}.
In this paper, we resolve this problem for free groups,
thereby answering Problem D of \cite{GG2} in the affirmative.

The statement in the title follows from Theorem \ref{Thm:Fame} below, a new result in topological dynamics that may be of independent interest.
Since an amenable action of a non-amenable group admits no invariant probability measures,
our result can be seen as a refinement of Theorem 0.1 of \cite{FSZ}.
More precisely,
our amenability result can replace Lemma 3.5 and Proposition 3.6 of \cite{FSZ}, which form a key step in the proof of one of the main results (Theorem 0.1)
there and rely on a careful combinatorial construction of a subshift and a clever application of a ``$2$-to-$1$ instance of Hall's matching criterion''.

Let $\mathsf{P}$ be a property of topological dynamical systems (e.g.,  minimality, amenability).
Let $\Lambda < \Gamma$ be groups.
We say that $\alpha \colon \Gamma \acts X$ satisfies $\Lambda$-$\mathsf{P}$ if the restricted action $\alpha|_\Lambda$ satisfies $\mathsf{P}$.

Throughout this article, let $H$ be a countably infinite group.
In Section \ref{subsection:Phi}, we recall  the space $\Phi$ of subshifts of $H\times \IF_2$ over the Cantor set, which was introduced and studied in \cite{FSZ}.
\begin{Thm}\label{Thm:Fame}
$\IF_2$-amenability is a generic property in $\Phi$.
\end{Thm}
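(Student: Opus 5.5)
Recall the setup from \cite{FSZ}. The space $\Phi$ consists of closed $H\times\IF_2$-invariant subsets $X \subseteq (2^{\IN})^{H\times \IF_2}$ (subshifts over the Cantor set). It carries a Polish topology that refines the Vietoris topology suitably, and it is a Baire space. The strategy is to write $\IF_2$-amenability as a countable intersection of open dense subsets of $\Phi$ and then invoke the Baire category theorem.

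\emph{Step 1: a $G_\delta$ description of amenability.} For a compact space $X$, an action $\IF_2\acts X$ is amenable iff for every finite $F\subseteq \IF_2$ and $\epsilon>0$ there is a continuous map $\mu\colon X\to \Prob(\IF_2)$ satisfying
\[
\sup_{x\in X}\|s.\mu_x-\mu_{sx}\|_1<\epsilon \quad\text{for all } s\in F.
\]
One may require $\mu$ to be finitely supported with rational values and to depend only on finitely many coordinates (a cylinder window). Fix an enumeration of pairs $(F,\epsilon)$ with $F$ ranging over the balls $B_n$ and $\epsilon=1/n$. Let $U_n$ be the set of $X\in\Phi$ for which such a locally constant map exists, defined on some clopen neighbourhood of $X$ in the full shift. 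Openness of $U_n$ then follows, because the condition only involves a finite window and a finite pattern set, and nearby subshifts have their patterns contained in the neighbourhood. Consequently $\bigcap_n U_n$ is exactly the set of $\IF_2$-amenable members of $\Phi$, and it is $G_\delta$.

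\emph{Step 2: density.} Given $X\in\Phi$ and a basic neighbourhood, which prescribes finitely many patterns on a finite window up to a finite depth of the Cantor coordinate, we must produce a nearby $Y\in\Phi$ that admits an approximately invariant map for $(B_n,1/n)$. The plan is to exploit the freeness of $\IF_2$ together with spare Cantor-set coordinates that the neighbourhood does not see. Using an unused coordinate, we encode extra data: each point of $Y$ carries a finite ``pointer'' that labels an element of $\IF_2$, or a boundary point truncated at large length, in a way compatible with the shift.

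The model for this is the boundary action $\IF_2\acts\partial\IF_2$, which is amenable. Its approximating maps $\mu_\omega$ are uniform measures on the geodesic segment of length $L$ toward $\omega$, and they are $O(|s|/L)$-invariant. Concretely, let $Y$ be the set of configurations in $X\times(\partial\IF_2)^{H}$, or rather a subshift version of this, whose extra coordinate records a boundary point that is shifted equivariantly. Then $Y$ factors onto the amenable action $\IF_2\acts\partial\IF_2$, so $Y$ is $\IF_2$-amenable. We must then check two things. First, $Y$ lies in $\Phi$, which requires encoding $\partial\IF_2$ into the free Cantor coordinates so that $Y$ is an $H\times\IF_2$-subshift. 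Second, $Y$ is close to $X$, since the projection to the observed coordinates equals $X$. Amenability passes to extensions of amenable actions, and this yields density.

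\emph{Main obstacle.} The hard part is making this coding compatible with the full $H\times\IF_2$-shift structure and with the particular topology on $\Phi$. If the topology of $\Phi$ only sees finitely many Cantor digits, the product construction above works directly. If instead the required genericity involves additional constraints built into $\Phi$ (for example minimality-type conditions imposed in \cite{FSZ}), one cannot simply take a product. In that case I would try to run the construction approximately: replace $\partial\IF_2$ by the finite-depth approximations (length-$L$ words), and glue them into $X$ by a marker argument along $\IF_2$-orbits. The map $\mu$ would only need to be $1/n$-invariant, which is exactly what $U_n$ demands. Controlling the boundary errors of these markers uniformly is where I expect the real work to lie.
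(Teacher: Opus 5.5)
\textbf{Verdict: there is a genuine gap in Step 2, the density step.} Step 1 is sound and is essentially what the paper does. The paper's open sets $\cA_n$ use specific maps $M_k$: they read the Cantor digits beyond position $k$ at the identity coordinate and push them through an affine averaging map. A minor correction: $\Phi$ carries no refined Polish topology. It is a closed subset of the compact metrizable hyperspace with the Vietoris topology.

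The issue in Step 2 is not topological; it is membership in $\Phi$, which you set aside. $\Phi$ is not the space of all subshifts. It is the closure of $\bigcup_n \cS_{\cB^\ast_n}$, the union of the $\cB^\ast_n$-irreducible subshifts, and no minimality is imposed. A subshift whose extra digits encode an exactly equivariant boundary point fails $\cB^\ast_n$-irreducibility for every $n$. The value at one point of an $\IF_2$-coset determines the values on the whole coset, so patterns at far-apart points of the same coset cannot be prescribed independently. Encoding $\partial\IF_2$ into spare digits therefore does not show that your $Y$ lies in $\Phi$. The marker argument you fall back on is not specified enough to be checked.

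\textbf{The missing idea} is to relax exact equivariance to $\delta$-almost equivariance with values in the \emph{convex} set $K=\Prob(\partial\IF_2)$. That is, use
$Z_\delta=\{z\in K^{\IF_2}:\|z(sg)-s_\ast(z(g))\|\le\delta \text{ for all } s\in S,\ g\in\IF_2\}$.
Convexity and the tree structure give a gluing lemma. Take pairwise $(2R+1)$-apart connected finite sets $C_i$ with $R\delta\ge 2$. Near $C_i$, define $z(g)$ as the convex combination, with weight $\rho_i(g)=\dist(g,C_i)/R$, of the transported value $(g\,\mathrm{pr}_i(g)^{-1})_\ast(z_i(\mathrm{pr}_i(g)))$ and a fixed orbit map $g_\ast\mu$; elsewhere set $z(g)=g_\ast\mu$. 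Each edge outside the $C_i$ then costs at most $2/R\le\delta$. Hence $V_\delta=((q^{\IF_2})^{-1}(Z_\delta))^H$ is $\cB^\ast_{2R+1}$-irreducible.

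Products are controlled through irreducibility, so you should then work with $Y\in\cS_{\cB^\ast_n}$, which is dense in $\Phi$. Place $V_\delta$ in the digits beyond $m$ and consider $\pi_m(Y)\times V_\delta$. This subshift:
\begin{enumerate}
\item is $\cB^\ast_{\max\{n,2R+1\}}$-irreducible;
\item tends to $Y$ as $m\to\infty$;
\item lies in $\cA_N$ once $\delta+2/m<1/N$. This uses the affine extension $P_m$ of the maps $b_m$, together with the bound $\|P_m(\mu)-P_m(\nu)\|_1\le\|\mu-\nu\|$ to absorb the $\delta$-error.
\end{enumerate}

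Once the gluing lemma is available, your exactly equivariant $\Prob(\partial\IF_2)$-valued version is also legitimate. It is the Vietoris limit $\bigcap_{\delta}V_\delta$ of members of $\Phi$. Without that lemma, you have no handle on membership in $\Phi$.
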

Combining this with Propositions 3.2 and 4.2 of \cite{FSZ}, we conclude:
\begin{Cor}\label{Cor:desired}
There exists a free, $\IF_2$-amenable, $H$-minimal action $\alpha \colon H\times \mathbb{F}_2 \acts X$ on a Cantor set $X$.
\end{Cor}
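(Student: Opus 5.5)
The plan is to combine three generic properties in the Polish (Baire) space $\Phi$. Theorem \ref{Thm:Fame} provides one of them, and Propositions 3.2 and 4.2 of \cite{FSZ} provide the other two. As I recall, \cite{FSZ} shows that freeness of the $H\times\IF_2$-action is generic in $\Phi$, and that $H$-minimality is generic in $\Phi$. Both are dense $G_\delta$ conditions, or at least comeager ones. By Theorem \ref{Thm:Fame}, $\IF_2$-amenability is also comeager. Since $\Phi$ is a Baire space, the intersection of these three comeager sets is comeager and hence nonempty. Any subshift $X$ in the intersection gives an action $\alpha\colon H\times \IF_2\acts X$ that is free, $\IF_2$-amenable and $H$-minimal.

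The one remaining point is that $X$ should be a Cantor set. Elements of $\Phi$ are closed invariant subsets of $(2^{\IN})^{H\times\IF_2}$, so they are compact, metrizable and totally disconnected. It remains to exclude isolated points. If $X$ had an isolated point $x$, then by $H$-minimality the orbit $Hx$ would be dense. Every point of $Hx$ is isolated, and a compact space with a dense open set of isolated points is finite, because the dense set is open and discrete and must cover $X$. A finite $X$ carrying an action of the infinite group $H$ cannot be free. So the freeness condition rules this out, and $X$ is perfect, hence homeomorphic to the Cantor set. Equivalently, one can note that the subshifts without isolated points themselves form a comeager set in $\Phi$ and intersect with that as well.

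I expect the main obstacle to be bookkeeping: checking that the versions of Propositions 3.2 and 4.2 of \cite{FSZ} give \emph{genericity}, and not just existence, of freeness and $H$-minimality in the same space $\Phi$ and the same topology used in Theorem \ref{Thm:Fame}. If one of them only gives genericity inside some closed or $G_\delta$ subspace $\Phi_0\subseteq\Phi$, I would first re-prove Theorem \ref{Thm:Fame} relative to $\Phi_0$. This should work because the density arguments are local perturbations that can be carried out inside $\Phi_0$. I would then apply the Baire category theorem in $\Phi_0$, which is again Polish.
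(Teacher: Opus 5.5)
Your argument is essentially the paper's: the paper records, citing Propositions 3.2 and 4.2 of \cite{FSZ}, that freeness and $H$-minimality are dense $G_\delta$ properties in the same compact metrizable space $\Phi$, so your contingency about a subspace $\Phi_0$ is unnecessary. It then intersects these with Theorem \ref{Thm:Fame} via Baire, and notes that minimal free subshifts have no isolated points and are therefore Cantor sets.

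One justification in your Cantor-set step is wrong. A compact space with a dense open set of isolated points need not be finite; for example $\{0\}\cup\{1/n : n\geq 1\}$. So density of $Hx$ alone does not force $Hx=X$. The fix uses minimality properly. The set $Hx$ is open (a union of isolated points) and $H$-invariant, so $X\setminus Hx$ is a closed invariant set not containing $x$, hence empty by $H$-minimality. Then $X=Hx$ is compact and discrete, hence finite, which contradicts freeness because $H$ is infinite.
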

From Corollary \ref{Cor:desired}, we derive the statement in the title.
\begin{Thm}\label{Thm:ameAF}
There exists an amenable action of $\mathbb{F}_2$ on a unital simple separable AF-algebra.
\end{Thm}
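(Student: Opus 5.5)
Choose $H$ to be a countably infinite amenable group; the simplest choice is $H=\IZ$. Corollary \ref{Cor:desired} then gives a free action $\alpha\colon \IZ\times\IF_2\acts X$ on the Cantor set such that $\alpha|_{\IF_2}$ is amenable and $\alpha|_{\IZ}$ is minimal. Since the two factors commute, $\IF_2$ acts on the crossed product $A:=C(X)\rtimes_{\alpha|_{\IZ}}\IZ$. For $g\in\IF_2$, let $\beta_g$ be the automorphism fixing the unitary $u$ implementing $\IZ$ and acting on $C(X)$ by $\alpha_g$. This is well defined by the universal property, because $\alpha_g$ commutes with $\alpha_n$ for every $n\in\IZ$.

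The first step is to obtain an AF-algebra. By Pimsner--Voiculescu or Putnam, the minimal Cantor crossed product $A$ is only a simple A$\mathbb{T}$-algebra, not AF, since $\mathrm{K}_1(A)\cong\IZ\neq 0$. So I would not use $A$ directly. Instead I would use Putnam's large subalgebra. Fix $y\in X$ and set
\[
A_y:=C^\ast\bigl(C(X),\, u\,C_0(X\setminus\{y\})\bigr)\subset A .
\]
Putnam showed that $A_y$ is a unital simple AF-algebra when $\alpha|_{\IZ}$ is minimal on the Cantor set; simplicity uses minimality. The trouble is that $\beta_g$ sends $A_y$ to $A_{g y}$, so $A_y$ is not invariant.

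A more robust route is to replace $\IZ$ by a locally finite group. Take $H=\bigoplus_{\IN}\IZ/2$, or more generally any countable locally finite group, and write $H=\bigcup_n H_n$ as an increasing union of finite groups. Then
\[
C(X)\rtimes H=\overline{\bigcup_n C(X)\rtimes H_n} .
\]
Each $C(X)\rtimes H_n$ is AF, because $X$ is a Cantor set and $H_n$ is finite: it is a direct limit of crossed products of finite-dimensional commutative algebras by a finite group, and these crossed products are finite-dimensional. So $B:=C(X)\rtimes H$ is a unital separable AF-algebra. It is simple because the $H$-action is minimal and free; freeness of $H$ follows from freeness of the product action, so the action is topologically free. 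As before, $\IF_2$ acts on $B$ by $\beta_g(f u_h)=\alpha_g(f)u_h$, which is well defined because the actions commute. I will therefore commit to this choice of $H$.

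The remaining step is amenability of $\beta\colon\IF_2\acts B$, and I expect it to be the main point. The plan is to use that $C(X)\subset Z(B)$? No: $C(X)$ is not central in $B$. Instead I would use the characterization of amenability via the quasicentral approximation property or Exel's approximation property, as in Ozawa--Suzuki and Buss--Echterhoff--Willett. Since $\alpha|_{\IF_2}$ is amenable on $X$, there are finitely supported continuous maps $\xi_i\colon X\to\ell^2(\IF_2)$ with $\|\xi_i\|=1$ and $\sup_x\|\xi_i(gx)-\lambda_g\xi_i(x)\|\to0$. I would view $\xi_i$ as elements of $\ell^2(\IF_2)\otimes C(X)\subset\ell^2(\IF_2)\otimes B$. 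Two properties must then be checked. The first is approximate $\beta$-equivariance, which is immediate. The second is approximate commutation with $B$, meaning $\|\xi_i b-b\xi_i\|\to0$. For $b=u_h$ this requires $\xi_i\circ h\approx\xi_i$ uniformly, which need not hold.

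To fix this, I would average over the finite group $H_n$: set $\eta(x)=|H_n|^{-1/2}\bigl(\sum_{h\in H_n}\xi(hx)\bigr)$, suitably normalized. Alternatively, I would apply amenability of the $\IF_2$-action on the quotient space to get $H_n$-invariant maps. A cleaner approach is to note that amenability of $\IF_2\acts X$, together with the fact that $H$ is amenable and commutes with $\IF_2$, makes $\IZ\times\IF_2$, or $H\times\IF_2$, act amenably on $X$. Then $B\rtimes\IF_2\cong C(X)\rtimes(H\times\IF_2)$ is nuclear. From there I would invoke the theorem that, for actions on nuclear algebras, nuclearity of the crossed product is equivalent to amenability of the action (Matsumura's theorem in the exact/nuclear setting, refined by Buss--Echterhoff--Willett and Ozawa--Suzuki). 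This shows that $\beta$ is amenable. This last equivalence is the step I would verify most carefully.
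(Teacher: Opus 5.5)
Your final argument is the paper's proof. You take $H$ locally finite, get $C(X)\rtimes H$ unital, simple (by minimality and freeness of $\alpha|_H$) and AF, and deduce amenability of $H\times\IF_2\acts X$ from amenability of $H$. You then conclude from nuclearity of $(C(X)\rtimes H)\rtimes_{\mathrm r}\IF_2\cong C(X)\rtimes_{\mathrm r}(H\times\IF_2)$; the detours through $H=\IZ$ and the quasicentral averaging can simply be dropped. The closing equivalence (for nuclear $A$ and discrete $G$, $A\rtimes_{\mathrm r}G$ is nuclear iff the action is amenable) is Anantharaman-Delaroche's Theorem 4.5 in \cite{AD}, which the paper cites, not Matsumura's theorem.
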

\begin{proof}
Applying Corollary \ref{Cor:desired} to a countably infinite locally finite group $H$ (for example, $\bigoplus_{n\in \IN} \IZ/2\IZ$ or $\mathfrak{S}_{\infty}:=\bigcup_{n\in \IN} \mathfrak{S}_n$),
we obtain an action $\alpha \colon H \times \mathbb{F}_2 \acts X$ on a Cantor set
satisfying the three conditions therein.
Then, by the minimality and freeness of $\alpha|_H$,
the reduced crossed product $A:=C(X)\rca{\alpha} H$ is simple by \cite{KT}.
Since $X$ is a Cantor set and $H$ is locally finite and countable, $A$ is a unital separable AF-algebra.
Since $H$ is amenable and $\alpha$ is $\IF_2$-amenable, it follows that $\alpha$ is amenable.
Hence the reduced crossed product $A\rca{\alpha} \mathbb{F}_2=C(X)\rca{\alpha}(H\times \IF_2)$ is nuclear.
By Theorem 4.5 of \cite{AD}, the induced action $\alpha \colon \IF_2 \acts A$ is amenable.
\end{proof}
Note that the crossed product $A\rca{\alpha} \IF_2$ of the \Cs-dynamical system constructed in the proof of Theorem \ref{Thm:ameAF}
is a Kirchberg algebra by Theorem B of \cite{GG1}.

The proof of Theorem \ref{Thm:Fame}  is in the same spirit as the construction in \cite{Suzeq} (see \cite{SuzAd}, Corollary 2.4):
we prove density by encoding approximate witnesses of amenability into the tail coordinates of the Cantor set $\{0, 1\}^\IN$.
However, the present proof is more involved than that of \cite{Suzeq}, as the space $\Phi$ of actions considered here is more complex than the one used therein.

By amenability, the reduced and full crossed products of the dynamical system obtained in Theorem \ref{Thm:ameAF} coincide.
As a consequence, we obtain the following novel insight into full amalgamated free product \Cs-algebras.
\begin{Cor}\label{Cor:amal}
There are unital inclusions of unital simple stably finite \Cs-algebras $D \subset A_0, A_1$
whose full amalgamated free product $A_0 \mathop{\ast}\limits_{D} A_1$ is a Kirchberg algebra.
\end{Cor}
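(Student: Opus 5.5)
The plan is to take $D:=A$, the unital simple AF-algebra from the proof of Theorem \ref{Thm:ameAF}, and to realize $A\rca{\alpha}\IF_2$ as a full amalgamated free product over $A$ by splitting $\IF_2=\IZ\ast\IZ$. Write $a,b$ for the free generators of $\IF_2$. Set
\[
A_0:=A\rtimes_{\alpha|_{\langle a\rangle}}\IZ, \qquad A_1:=A\rtimes_{\alpha|_{\langle b\rangle}}\IZ,
\]
with the canonical unital inclusions $A\subset A_0, A_1$.

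The first step is the standard universal-property argument that, for an action of a free product $\Gamma_0\ast\Gamma_1$ on a unital \Cs-algebra $A$, the full crossed product satisfies
\[
A\rtimes(\Gamma_0\ast\Gamma_1)\cong (A\rtimes\Gamma_0)\mathop{\ast}\limits_{A}(A\rtimes\Gamma_1).
\]
This holds because a covariant representation of $(A,\Gamma_0\ast\Gamma_1)$ is the same thing as a pair of covariant representations of $(A,\Gamma_0)$ and $(A,\Gamma_1)$ agreeing on $A$. By the defining universal property of the full amalgamated free product, both sides then represent the same functor, so they are canonically isomorphic.

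The second step uses amenability. The action $\alpha\colon\IF_2\acts A$ is amenable, so the full and reduced crossed products coincide. Hence $A_0\mathop{\ast}\limits_{A}A_1\cong A\rca{\alpha}\IF_2$, which is a Kirchberg algebra by Theorem B of \cite{GG1}, as noted after Theorem \ref{Thm:ameAF}.

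The remaining, mildly delicate step is to verify that $D$, $A_0$ and $A_1$ are simple and stably finite.
\begin{itemize}
\item For $D=A$, this is immediate: $A$ is a unital simple AF-algebra, and such algebras are stably finite.
\item For $A_0$, identify $A_0=(C(X)\rtimes H)\rtimes\IZ\cong C(X)\rtimes(H\times\langle a\rangle)$. The group $H\times\IZ$ is amenable, so full and reduced crossed products agree. Its action on $X$ is free, as a restriction of the free action $\alpha$, and it is minimal, since already its restriction to $H$ is minimal. Hence $A_0$ is simple by \cite{KT}.
\item Since $H\times\IZ$ is amenable, there is an invariant Borel probability measure $\mu$ on $X$. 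By minimality $\mu$ has full support, so composing with the canonical conditional expectation gives a faithful tracial state on $A_0$. A unital \Cs-algebra with a faithful tracial state is stably finite, because the trace extends faithfully to matrix amplifications and so rules out infinite projections there.
\item The same argument applies to $A_1$.
\end{itemize}
The only point needing care is the identification of the iterated crossed product with the crossed product by $H\times\IZ$. This holds because $\alpha$ is an action of the direct product $H\times\IF_2$, so the $\IZ$-action commutes with the $H$-action.
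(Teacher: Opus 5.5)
Your proposal is correct and is the paper's own proof. You take $D=A$, $A_0=A\rtimes\langle a\rangle$ and $A_1=A\rtimes\langle b\rangle$, identify the full crossed product by $\IF_2=\IZ\ast\IZ$ with the full amalgamated free product, and conclude via amenability (full $=$ reduced) and Theorem B of \cite{GG1}; you also spell out the simplicity and stable finiteness of $A_0,A_1$ (via \cite{KT} and a faithful trace coming from an invariant measure for the amenable group $H\times\IZ$), which the paper leaves implicit.
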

\begin{proof}
Let $\IF_2 \acts A$ be the action constructed in the proof of Theorem \ref{Thm:ameAF}.
Let $a, b$ denote the canonical generators of $\IF_2$.
Put $D:=A$, $A_0:= A\rca{\alpha} \langle a \rangle$, $A_1:=A\rca{\alpha}\langle b \rangle$.
The canonical inclusions $D \subset A_i$ ($i=0, 1$) give the desired pair.
\end{proof}
\begin{Rem}
Since $\IF_n$ embeds into $\IF_2$ for every $n\geq 3$, including $n=\infty$, Corollary \ref{Cor:desired} and Theorem \ref{Thm:ameAF} also hold for other free groups. In particular, Corollary \ref{Cor:amal} concerning full amalgamated free product decompositions holds in the $n$-factor case as well.
\end{Rem}
It would be interesting to determine which unital Kirchberg algebras admit such a full amalgamated free product decomposition.

\section{Preliminaries}
Here, we collect basic terminology used throughout this paper.
For background on \Cs-algebras and amenability of \Cs-dynamical systems, we refer the reader to \cite{BO} and \cite{OS}, respectively.

\subsection*{Conventions}
For sets $X$, $I$, $J$, we frequently make the canonical identifications among the Cartesian powers $(X^I)^J$, $(X^J)^I$, and $X^{I\times J}$ without explicit mention.
We regard $\IF_2$ as the subgroup $\{1_H\} \times \IF_2$ of $H\times \IF_2$. We include $0$ in $\IN$.
As in the proof of Corollary \ref{Cor:amal}, we write $a, b$ for the canonical generators of $\IF_2$, and set $S:=\{a^{\pm 1}, b^{\pm 1}\}$.
\subsection{The hyperspace of a Cantor set and the Vietoris topology}
Let $I$ be a countably infinite set.
On the set ${\rm CL}(\{0, 1\}^I)$ of non-empty closed subsets of the Cantor set $\{0, 1\}^I$,
the Vietoris topology can be described as follows.
For any finite subset $F\subset I$,
let $(p_F)_\ast \colon {\rm CL}(\{0, 1\}^I)\rightarrow \mathfrak{P}(\{0, 1\}^F)$
be the map induced by the coordinate projection $p_F$, where $\mathfrak{P}$ denotes the power set.
The Vietoris topology on ${\rm CL}(\{0, 1\}^I)$
is the weakest topology making the maps $(p_F)_\ast$ continuous,
where the finite sets $\mathfrak{P}(\{0, 1\}^F)$ are equipped with the discrete topology.
This topological space, called the \emph{hyperspace of $\{0, 1\}^I$}, is denoted by $\cH(\{0, 1\}^I)$.
It is easy to see that $\cH(\{0, 1\}^I)$ is homeomorphic to a closed subspace
of the product space $\prod_{F} \mathfrak{P}(\{0, 1\}^F)$, where $F$ ranges over finite subsets of $I$.
Hence it is compact and metrizable.

\subsection{Forest structure and metric on $H\times \IF_2$}
We regard $H \times \IF_2$ as a forest
by identifying it with the disjoint union of the copies $\{h\} \times \IF_2$ ($h\in H$) of the left Cayley tree $\IF_2$ defined by $S$.\footnote{Thus, the edge set on $H\times \IF_2$ is given by $\{((h, g), (h, sg)): h\in H, g\in \IF_2, s\in S\}$.}
Let $d$ be the graph metric on $H\times \IF_2$, which takes values in $\IN \cup \{\infty\}$.
For $n \in \IN$, we say that two subsets $B, C \subset H \times \IF_2$ are \emph{$n$-apart} if $\inf \{d(b, c) : b \in B, c \in C\} > n$.

With this setup, for $n \in \IN$, one can describe the set $\cB^*_n$ of subsets of $H\times \IF_2$ defined in Definition 4.1 of \cite{FSZ} as follows.
The set $\cB^*_n$ consists of all finite subsets $B$ of $H \times \IF_2$ such that any two distinct connected components of $B$ are $n$-apart.
Note that $\cB^\ast_n \supset \cB^\ast_{n+1}$.
\subsection{A space of subshifts $\Phi$} \label{subsection:Phi}
Following \cite{FSZ}, for a countably infinite group $\Lambda$ and a compact space $X$, we equip $X^\Lambda$ with the \emph{right} shift action: $g(x_h)_{h\in \Lambda}:=(x_{hg})_{h\in \Lambda}$.
We define $\Sub{(\{0, 1\}^{\IN})^{\Lambda}}\subset \cH({\{0, 1\}^{\IN\times \Lambda}})$
to be the set of all $\Lambda$-invariant closed non-empty subsets,
which is a closed subset.
Note that if $Y\in \Sub{(\{0, 1\}^{\IN})^{\Lambda}}$ is minimal and free,
then $Y$ is a Cantor set, since it has no isolated points.

For $n\in \IN$, write $\pi_n \colon (\{0, 1\}^{\IN})^{\Lambda}\rightarrow (\{0, 1\}^n)^\Lambda$
for the $\Lambda$-factor map induced from the coordinate projection $\{0, 1\}^\IN \rightarrow \{0, 1\}^n$.

We recall the space $\Phi \subset \Sub{(\{0, 1\}^{\IN})^{H \times \IF_2}}$ of subshifts of $H \times \IF_2$ over $\{0, 1\}^\IN$ introduced and studied in \cite{FSZ}.
First, for $n\in \IN$, a subshift $Y\in  \Sub{(\{0, 1\}^{\IN})^{ H \times \IF_2}}$ is said to be \emph{$\cB^\ast_n$-irreducible}
if for any $l\in \IN$, there is a finite subset $\mathcal{F} \subset H\times \IF_2$, called a \emph{witness} for $l$,  such that for any $k\in \IN$, any $y_1, \ldots, y_k\in Y$, and any $B_1, \ldots, B_k \in \cB^\ast_n$
satisfying $(\mathcal{F}\cdot B_i) \cap B_j =\emptyset$ for all distinct pairs $i$ and $j$, there exists $y\in Y$ with $\pi_l(y)|_{B_i}=\pi_l(y_i)|_{B_i}$ for all $i$.
For a more general definition, see Definition 3.1 of \cite{FSZ}.

Write $\cS_{\cB^\ast_n} \subset \Sub{(\{0, 1\}^{\IN})^{H \times \IF_2}}$ for the set of all $\cB^\ast_n$-irreducible subshifts.
We set $\Phi$ to be the closure of the (increasing) union $\bigcup_{n\in \IN}\cS_{\cB^\ast_n}$ in $\Sub{(\{0, 1\}^{\IN})^{ H \times \IF_2}}$.
Note that $\Phi$ is non-empty as it contains the full shift,
and is a compact metrizable space.
\section{Proof of Theorem \ref{Thm:Fame}}
The following result of \cite{FSZ} is fundamental in this paper.
\begin{Thm}[Propositions 3.2 and 4.2 in \cite{FSZ}; see also the paragraph preceding Proposition 4.2 therein]
For any countably infinite group $H$,
both freeness and $H$-minimality are dense $G_\delta$ properties in $\Phi$.
\end{Thm}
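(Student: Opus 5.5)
The plan is to prove the two halves of the statement separately. The $G_\delta$ part is soft and uses only the Vietoris topology. Density is proved by perturbing an irreducible subshift in coordinates the basic open set does not see.

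\emph{$G_\delta$ part.} For freeness, fix $g\in H\times\IF_2\setminus\{1\}$ and let $K_g$ be the set of $g$-fixed points in the full shift. It is closed. Since $K_g$ is closed, the set $\{Y : Y\cap K_g=\emptyset\}$ is open in $\cH$, and freeness is the countable intersection of these sets over $g$. For $H$-minimality, let $U$ range over the countably many cylinder sets (clopen, determined by $\pi_l$ on a finite window), and put
\[
O_U:=\{Y : Y\cap U=\emptyset\}\cup\{Y : Y\subset \textstyle\bigcup_{h\in E}hU \text{ for some finite } E\subset H\}.
\]
Both pieces are open, because $U$ and each finite union $\bigcup_{h\in E} hU$ are clopen. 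A closed $H$-invariant set $Y$ is $H$-minimal exactly when every cylinder meeting $Y$ has finitely many $H$-translates covering $Y$. Hence $H$-minimality equals $\bigcap_U O_U$ intersected with $\Phi$, which is a $G_\delta$.

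\emph{Density.} Since $\bigcup_n\cS_{\cB^\ast_n}$ is dense in $\Phi$, it suffices to fix $Y\in\cS_{\cB^\ast_n}$ and a basic neighbourhood of it. We may take the neighbourhood to be determined by $\pi_l$ on a finite window. The goal is a $\cB^\ast_{n'}$-irreducible $Y'$ with $\pi_l(Y')=\pi_l(Y)$ that is free, and separately one that lies in a prescribed $O_U$. The Baire category theorem then combines these countably many dense open conditions. Since $\Phi$ is compact metrizable, it suffices to show that each individual condition is dense.

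For freeness, I would use a product construction. Take a fixed free $\cB^\ast_n$-irreducible subshift $Z$ over $\{0,1\}^\IN$; a suitable Bernoulli-type or coloring subshift on the forest should do. Place $Z$ in the coordinates beyond $l$, giving $Y'=\{y : \pi_l(y)\in\pi_l(Y),\ \text{tail}(y)\in Z\}$. Then $Y'$ is irreducible, with witness the union of the witnesses of $Y$ and $Z$, and it is free because $Z$ is. It also agrees with $Y$ on the neighbourhood, since that neighbourhood only sees the first $l$ coordinates.

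For $O_U$, suppose $U$ meets $Y$ and fix a pattern $p$ witnessing this. I would use irreducibility together with the infinitude of $H$ to pass to a subshift in which $p$ occurs $H$-syndetically. Concretely, choose a finite $E\subset H$ whose elements are far apart, so that the sets $\mathcal{F}$-translated are disjoint. Then let $Y'$ be the set of configurations that follow $Y$, read through a tail-coordinate marker, and are forced to display $p$ within every $E$-translate. Irreducibility of $Y$ guarantees that $Y'$ is non-empty and still irreducible, possibly for a larger $n'$. The markers live in coordinates beyond $l$, so $Y'$ stays in the given neighbourhood.

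\emph{Main obstacle.} I expect the hardest step to be this last one. The step of forcing syndetic occurrence of $p$ along $H$ must keep $\cB^\ast_{n'}$-irreducibility, and arranging the marker layer so that gluing along arbitrary far-apart $B_i$ remains possible is delicate. This is where the condition $(\mathcal F\cdot B_i)\cap B_j=\emptyset$ and the forest structure of $H\times\IF_2$ must be used carefully.
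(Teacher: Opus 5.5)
The paper does not prove this statement; it quotes it from Propositions 3.2 and 4.2 of \cite{FSZ}, so your argument has to stand on its own.

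\textbf{What works, and the freeness gap.} Your $G_\delta$ half is correct. Reducing density to perturbing a single $Y\in\cS_{\cB^\ast_n}$ by a product $\pi_l(Y)\times Z$ in the tail coordinates is also sound; the paper uses the same device, $Y_m=\pi_m(Y)\times V_\delta$. For freeness, however, all the content lies in the existence of a free irreducible $Z$, and both candidates you name fail. The full shift has constant points, which are fixed. Proper colourings of the forest, with edges only inside the trees $\{h\}\times\IF_2$, include lifts $z(h,g)=c(g)$ of a colouring of $\IF_2$, and these are fixed by all of $H$. This is easily repaired. For $g\neq 1$, let $Z_g$ be the set of $z\in\{0,1,2\}^{H\times\IF_2}$ with $z(x)\neq z(gx)$ for all $x$. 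Then $Z_g$ is invariant under the right shift. It has no $g$-fixed point, since such a point would satisfy $z(g^{-1})=z(1)$. It is strongly irreducible with witness $\{g,g^{-1}\}$ by greedy extension, because every vertex has at most two neighbours $g^{\pm1}x$. Placing $\prod_{g\neq1}Z_g$ on disjoint blocks of $\IN$-coordinates gives a free $Z$ that is strongly irreducible at every level, since each level involves only finitely many factors. Hence $Z$ lies in $\cS_{\cB^\ast_n}$ for every $n$.

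\textbf{The real gap: density of the sets $O_U$.} This is the content of Proposition 4.2 of \cite{FSZ}, and you only sketch it; three ingredients are missing.

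First, since $Y'\subset\bigcup_{h\in E}hU$ and $Y'$ is $\IF_2$-invariant, occurrences of $p$ must be forced $H$-syndetically in \emph{every} fibre $H\times\{g\}$, $g\in\IF_2$. At the same time, windows of occurrences in fibres at small tree distance must stay $\mathcal{F}$-separated in the $H$-coordinate, so that irreducibility of $Y$ can realize all of them. Choosing one finite $E\subset H$ with far-apart elements does not address this cross-fibre interaction.

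Second, the marker layer must itself be $\cB^\ast_{n'}$-irreducible. This rules out anything constant along one direction, such as markers $M_H\times\IF_2$ or a single colouring of $\IF_2$ reused in every tree, because two far-apart singletons in one tree, or in one fibre, must be gluable independently. You need something like a strongly irreducible subshift of sets that are sparse within fibres at a large scale, separated across nearby fibres at a smaller scale, and syndetic in every fibre. Such a subshift can be built by a greedy argument with nested scales, but your proposal does not do this.

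Third, when gluing $(y_i,m_i)$ along $B_i$, you must also copy $y_i$ on the marked windows that are not $\mathcal{F}$-separated from $B_i$. You then have to check that this enlarged set still lies in $\cB^\ast_n$, by attaching windows to nearby components along short geodesics in the trees. You also have to check that it stays $\mathcal{F}$-separated from the remaining windows. This is exactly where the shape of $\cB^\ast_n$ (no separation required between components in different $H$-cosets) and the passage from $n$ to a larger $n'$ are used.

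Without markers, a bare rule ``$p$ occurs in every $E$-translate'' has no evident gluing. A translate meeting $B_i$ only partially forces you to import occurrences of $p$ from $y_i$ outside $B_i$, and this cascades. So your sentence that irreducibility of $Y$ guarantees $Y'$ is non-empty and irreducible is the statement to be proved, not an argument. You should also enlarge $l$ so that $U$ depends only on the first $l$ layers; otherwise the tail markers can interfere with $U$.
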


Let $\partial \IF_2$ denote the ideal boundary of $\IF_2$ (see Section 5.1 of \cite{BO}).
Let $K:=\Prob(\partial \IF_2) \subset C(\partial \IF_2)^\ast$, equipped with the weak-$\ast$ topology.
Write $\| \bullet \|$ for the dual norm of $C(\partial \IF_2)^\ast$.
Denote by $\bullet_\ast \colon \IF_2 \acts K$ the action induced by the canonical action $\IF_2 \acts \partial \IF_2$.

For each $\delta >0$, define the subset $Z_\delta \subset K^{\IF_2}$ to be
\[Z_\delta:=\{z\in K^{\IF_2}: \|z(sg)-s_\ast(z(g))\| \leq \delta\quad \mathrm{for~all~}s\in S, g\in \IF_2\}.\]
Note that for any $\mu \in K$, the orbit map $g\mapsto g_\ast \mu$ belongs to $Z_\delta$.
Hence $Z_\delta$ is non-empty. 
Since the norm is weak-$\ast$ lower semicontinuous, $Z_\delta$ is closed.
Clearly $Z_\delta$ is $\IF_2$-invariant with respect to the right shift action.
The key observation is that $Z_\delta$ is sufficiently irreducible while being close to amenable.
\begin{Lem}\label{Lem:irr}
Let $\delta>0$ and let $R\in \IN_{\geq 1}$ with $R\delta \geq 2$.
Set $r:=2R+1$. Let $C_1, \ldots, C_m \subset \IF_2$
be non-empty connected finite subsets which are pairwise $r$-apart.
Let $z_1, \ldots, z_m\in Z_\delta$.
Then there is $z\in Z_\delta$ with
\[z|_{C_i}=z_i|_{C_i}\quad \mathrm{for~all~}i.\]
\end{Lem}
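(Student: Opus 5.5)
We build $z$ by prescribing it on each $C_i$, interpolating along the tree in a collar of width $R$ around each $C_i$, and using a constant default elsewhere. Fix any $\mu\in K$ as a base point. The tree structure of the Cayley graph of $\IF_2$ with respect to $S$ makes this possible. For each $g\in\IF_2$ let $t_i(g):=\min\{d(g,C_i),R\}$, the distance from $g$ to $C_i$ truncated at $R$. Since the $C_i$ are pairwise $r$-apart with $r=2R+1$, the $R$-neighbourhoods $N_i:=\{g: d(g,C_i)< R\}$ are pairwise disjoint; in fact they are at distance at least $2$ from each other. Hence every $g$ lies in at most one $N_i$.

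Since $C_i$ is connected in a tree, each $g\notin C_i$ has a unique nearest point $c_i(g)\in C_i$. We define
\[ z(g):=\Bigl(1-\tfrac{t_i(g)}{R}\Bigr)\, \bigl(g c_i(g)^{-1}\bigr)_\ast z_i(c_i(g)) + \tfrac{t_i(g)}{R}\, g_\ast\mu \quad (g\in N_i), \]
and $z(g):=g_\ast\mu$ for $g$ outside all $N_i$. On $C_i$ we have $t_i=0$ and $c_i(g)=g$, so $z|_{C_i}=z_i|_{C_i}$. Each $z(g)$ is a convex combination of probability measures, so $z(g)\in K$.

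It remains to check $\|z(sg)-s_\ast z(g)\|\le\delta$ for all edges $(g,sg)$. If both endpoints lie outside every $N_i$, the difference is $0$. Otherwise both endpoints lie in the closure region of a single $i$, because the neighbourhoods are separated. There are three cases.

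\emph{Both endpoints in $C_i$.} The bound follows from $z_i\in Z_\delta$.

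\emph{One endpoint in $C_i$, the other not.} Say $g\in C_i$ and $sg\notin C_i$. In a tree the nearest point of $sg$ is $g$, so $z(sg)$ is the mixture with weight $1/R$ of $s_\ast z_i(g)$ and $(sg)_\ast\mu$. Then $\|z(sg)-s_\ast z(g)\|=\tfrac1R\|(sg)_\ast\mu-s_\ast z_i(g)\|\le 2/R\le\delta$.

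\emph{Neither endpoint in $C_i$.} Then $c_i(g)=c_i(sg)=:c$, because the geodesic from $C_i$ passes through a single branch. Write $\nu:=c^{-1}$-free notation $\nu_g:=(gc^{-1})_\ast z_i(c)$. Then $s_\ast\nu_g=\nu_{sg}$, and likewise $s_\ast g_\ast\mu=(sg)_\ast\mu$. So both terms transform equivariantly and only the weights change, by at most $1/R$. This gives a difference of at most $\tfrac1R\cdot 2\le\delta$. Edges leaving $N_i$ to outside, where $t_i$ goes from $R-1$ to $R$, are covered by the same estimate with $t=R$.

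The main obstacle is the careful bookkeeping of nearest-point projections in the tree. We must verify that $c_i(g)=c_i(sg)$ off $C_i$ and that no vertex is influenced by two different $C_j$. Both facts rely on connectedness of $C_i$, on the tree structure, and on the $r$-apart hypothesis with $r=2R+1$.
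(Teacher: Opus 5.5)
Your proposal is correct and is essentially the paper's proof. It uses the same nearest-point interpolation $z(g)=(1-\dist(g,C_i)/R)\,(g\,c_i(g)^{-1})_\ast z_i(c_i(g))+(\dist(g,C_i)/R)\,g_\ast\mu$ on the $R$-collar and $g_\ast\mu$ elsewhere, and the same $2/R\le\delta$ estimate based on $c_i(sg)=c_i(g)$ off $C_i$. Two points are only cosmetic: your ``say $g\in C_i$'' reduction tacitly uses $S=S^{-1}$ and that each $s_\ast$ is an isometry, and the sentence introducing $\nu_g$ is garbled, though its intended meaning is clear.
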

\begin{proof}
The rough idea of the proof is to use the tree structure of $\IF_2$ to gradually deform the concatenation map $\bigsqcup_i z_i|_{C_i}$ into an orbit map outside $\bigsqcup_i C_i$.
Fix any $\mu \in K$.
For each $i$, let $\mathrm{pr}_i\colon \IF_2 \rightarrow C_i$ be the projection map,
that is, for $g\in \IF_2$, $\mathrm{pr}_i(g)\in C_i$ is defined to be the unique point nearest to $g$.

For each $i$,
define $\rho_i\colon \IF_2 \rightarrow \IR_{\geq 0}$
to be
\[\rho_i(g):=\frac{\dist(g, C_i)}{R}\quad \text{ for } g\in \IF_2.\]
We then define $z\in K^{\IF_2}$ by
\[
z(g) :=
\begin{cases}
(1 - \rho_i(g)) (g \cdot \mathrm{pr}_i(g)^{-1})_* (z_i(\mathrm{pr}_i(g))) + \rho_i(g) g_* \mu & \text{if } \rho_i(g) < 1 \text{ for some } i, \\
g_* \mu & \text{otherwise.}
\end{cases}
\]
Note that this is well-defined by the $r$-apartness of the $C_i$'s.
Clearly $z|_{C_i}=z_i|_{C_i}$ for all $i$.
Hence it suffices to show that $z\in Z_\delta$.
To check the condition, let $g\in \IF_2$ and $s\in S$.
If $\dist(g, C_i) \geq R+1$ for all $i$, then one has
$z(sg)=(sg)_\ast(\mu) = s_\ast(z(g))$.
Assume that $\dist(g, C_i) \leq R$ for some $i$.
Note that again by the $r$-apartness of the $C_j$'s,
such an $i$ is unique, say $i_0$, and for any other $j$, one has $\dist(sg, C_j) \geq R$.
When both $g$ and $sg$ are contained in $C_{i_0}$, we have $\|s_\ast(z(g))-z(sg)\| \leq \delta$ since $z_{i_0}\in Z_\delta$.
Otherwise, one has $|\dist(sg, C_{i_0})- \dist(g, C_{i_0})|\leq 1$ and $\mathrm{pr}_{i_0}(sg)=\mathrm{pr}_{i_0}(g)$,
hence $\|s_\ast(z(g))-z(sg)\|\leq 2/R \leq \delta$.
This proves $z\in Z_\delta$.
\end{proof}
Since $K$ is not totally disconnected, we next transfer $Z_\delta$
to a subshift over the Cantor set via a continuous surjection.
We fix a continuous surjection $q\colon \{0, 1\}^\IN \rightarrow K$, which exists since $K$ is a (non-empty) compact metrizable space.
Let $q^{\IF_2} \colon \{0, 1\}^{\IN \times \IF_2} \rightarrow K^{\IF_2}$ denote the surjection obtained by applying $q$ entrywise.
Then define
 \[W_\delta:=\left(q^{\IF_2}\right)^{-1}(Z_\delta)\in \Sub{(\{0, 1\}^{\IN})^{\IF_2}},\quad V_\delta:=(W_\delta)^H \in \Sub{(\{0, 1\}^{\IN})^{H\times \IF_2}}.\]
 Note that by Lemma \ref{Lem:irr} and the surjectivity of $q$,
 $W_\delta$ satisfies the same irreducibility condition as in Lemma \ref{Lem:irr} with the same $r\in \IN$ therein.
Now applying this irreducibility on each $\IF_2$-coset, we conclude the $\cB^\ast_r$-irreducibility of $V_\delta$, that is, $V_\delta \in \cS_{\cB^\ast_r} \subset \Phi$,
with the witness $\mathcal{F}=\{1_H\}\times \mathrm{Ball}(r)$ for all $l\in \IN$, where 
 \[\mathrm{Ball}(k):=\{g\in \IF_2: d(1_{\IF_2}, g)\leq k\} \quad \text{for $k\in \IN$}.\]

We next recall how to witness the amenability of $\IF_2 \acts \partial \IF_2$ from Section 5.1 of \cite{BO}.
 For $x\in \partial \IF_2$ and $j\in \IN$, let $x|_j$ denote the first $j$ letters of $x$, regarded as a reduced word in $\IF_2$ with $x|_0:=1_{\IF_2}$.
 For $N \in\IN_{\geq 1}$, define a continuous map $b_N \colon \partial \IF_2 \rightarrow \Prob(\IF_2)$
 to be 
 \[b_N(x):=\frac{1}{N}\sum_{j=0}^{N-1}\delta_{x|_j}.\]
 We equip $\Prob(\IF_2) \subset \ell^1(\IF_2)$ with the action $\bullet_\ast \colon \IF_2 \acts \Prob(\IF_2)$ induced by the left translations.
 Note that the image of $b_N$ is contained in $\Prob(\mathrm{Ball}(N-1))$.
It is not hard to see that
 \[\|b_N(sx)-s_\ast (b_N(x))\|_1 \leq 2/N\]
 for all $x\in \partial \IF_2$ and $s\in S$.
 
 Next, we extend each $b_N$ to an affine map on $K=\Prob(\partial\IF_2)$ by integration.
 More precisely, we define $P_N \colon K \rightarrow \Prob(\mathrm{Ball}(N-1))\subset \Prob(\IF_2)$ to be
 \[P_N(\mu)(g):=\int_{\partial \IF_2} b_N(x)(g) d\mu(x) \quad \text{for all $g\in \mathrm{Ball}(N-1)$ and $\mu \in K$}.\]
 Note that $P_N$ is well-defined and continuous because $\mathrm{Ball}(N-1)$ is a finite set.
By definition, one has the inequalities
 \begin{equation}\label{equation:Lip}
\|P_N(\mu)-P_N(\nu)\|_1 \leq \|\mu-\nu\|\quad \text{for all $\mu, \nu \in K$},
 \end{equation}
 
 \begin{equation}\label{equation:ame}
 \|P_N(s_\ast \mu) - s_\ast(P_N(\mu))\|_1 \leq \frac{2}{N} \quad \text{for all $s\in S$ and $\mu \in K$}.
 \end{equation}
From these conditions, for any $z\in Z_\delta$ and $s\in S$,  we obtain
\[\| P_N((sz)(1_{\IF_2}))-s_\ast P_N(z(1_{\IF_2})) \|_1 \leq \frac{2}{N}+\delta.\]
 Next, define $\theta \colon V_\delta \rightarrow K$ to be
 \[\theta(v):= q(v(1_{H\times \IF_2})) \quad \text{for $v\in V_\delta$}.\]
 Note that for any $v\in V_\delta$ and any $s\in S$,
 as $q^{\IF_2}(v|_{\IF_2}) \in Z_\delta$,
 one has
 \begin{equation}\label{equation:theta}
 \|\theta(sv)-s_\ast(\theta(v))\|\leq \delta.
\end{equation} 
 Let $\varsigma \colon \{0, 1\}^\IN \rightarrow \{0, 1\}^\IN$
 denote the backward shift: $\varsigma((x_n)_{n\in \IN}):=(x_{n+1})_{n\in \IN}$.
 For each $N\in \IN_{\geq 1}$,
 define $M_N \colon \{0, 1\}^{\IN \times H\times \IF_2} \rightarrow \Prob(\mathrm{Ball}(N-1)) \subset \Prob(\IF_2)$ to be
 $M_N(x):=P_N(q(\varsigma^N(x(1_{H\times \IF_2}))))$.
 For $n\in \IN_{\geq 1}$, we define
\[\cA_n := \bigcup_{k \in \IN_{\geq 1}} \left\{Y\in \Phi: \sup_{y\in Y} \| M_k(sy)-s_\ast(M_k(y))\|_1 <1/n \text{ for all $s\in S$} \right\}.\]
Since $\mathrm{Ball}(k-1)$ is a finite set,
the function $y\mapsto \| M_k(sy)-s_\ast(M_k(y))\|_1$ on  $\{0, 1\}^{\IN \times H\times \IF_2}$ is continuous.
Hence each $\cA_n$ is open in $\Phi$.
 
 Observe that any $Y\in \bigcap_{n\in \IN_{\geq 1}} \cA_n$ is $\IF_2$-amenable.
 Indeed, for any such $Y$,
 there exists a sequence $(k_n)_{n \in \IN_{\geq 1}} \subset \IN_{\geq 1}$
 such that the maps $({M_{k_n}}|_{Y})_{n\in \IN_{\geq 1}}$ witness the $\IF_2$-amenability of $Y$.
 Thus, by the Baire category theorem, Theorem \ref{Thm:Fame} reduces to the following claim.
 \begin{Lem}
 For any $N\in \IN_{\geq 1}$, the set $\cA_N$ is dense in $\Phi$.
 \end{Lem}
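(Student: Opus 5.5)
The plan is to perturb an arbitrary $Y\in\Phi$ only in the "tail" coordinates of $\{0,1\}^\IN$. The first $k$ coordinates will stay as in $Y$, and the tail coordinates read by $M_k$ will be replaced with a copy of $V_\delta$, whose $\theta$ is almost equivariant by (\ref{equation:theta}).

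First, fix $Y\in\Phi$ and a basic Vietoris neighbourhood $\cO$ of $Y$. Since $\Phi$ is the closure of $\bigcup_n \cS_{\cB^\ast_n}$, we may shrink $\cO$ and replace $Y$ by some $Y_0\in\cS_{\cB^\ast_n}\cap\cO$. After enlarging the finite coordinate set defining $\cO$, we may assume $\cO$ is given by prescribing $(p_F)_\ast$ for a finite $F\subset \{0,\dots,l-1\}\times H\times\IF_2$. Then any $Y'$ with $\pi_l(Y')=\pi_l(Y_0)$ lies in $\cO$.

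Next choose $k\ge \max(l,4N)$ and $\delta>0$ with $\delta<1/(2N)$, and take $R,r$ as in Lemma \ref{Lem:irr}. Identify $\{0,1\}^\IN=\{0,1\}^k\times\{0,1\}^\IN$ via $x\mapsto (x|_{[0,k)},\varsigma^k(x))$, and define
\[Y':=\pi_k(Y_0)\times V_\delta\subset (\{0,1\}^k)^{H\times\IF_2}\times(\{0,1\}^\IN)^{H\times\IF_2}=(\{0,1\}^\IN)^{H\times\IF_2}.\]
This set is closed, non-empty and $H\times\IF_2$-invariant, and $\pi_l(Y')=\pi_l(Y_0)$ because $l\le k$. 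We then verify that $Y'\in\cS_{\cB^\ast_{n'}}$ with $n':=\max(n,r)$, using $\cB^\ast_{n'}\subset\cB^\ast_n\cap\cB^\ast_r$. Given $l'$, take as witness the union of a witness for $Y_0$ at level $k$ and $\{1_H\}\times\mathrm{Ball}(r)$, which is the witness of $V_\delta$ at every level. Given $y_i=(u_i,v_i)$ and $B_i$ pairwise separated by this witness, irreducibility of $Y_0$ yields $u\in Y_0$ matching $\pi_k(u_i)$ on the $B_i$. The irreducibility of $V_\delta$ yields $v\in V_\delta$ matching $\pi_{l'}(v_i)$ on the $B_i$, and in fact matching all coordinates, via Lemma \ref{Lem:irr} applied cosetwise. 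Then $(\pi_k(u),v)\in Y'$ works, so $Y'\in\Phi\cap\cO$.

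Finally, for $y=(u,v)\in Y'$ we have $M_k(y)=P_k(\theta(v))$. Combining (\ref{equation:Lip}), (\ref{equation:ame}) and (\ref{equation:theta}) gives
\[\|M_k(sy)-s_\ast M_k(y)\|_1\le \|P_k(\theta(sv))-P_k(s_\ast\theta(v))\|_1+\|P_k(s_\ast\theta(v))-s_\ast P_k(\theta(v))\|_1\le \delta+2/k<1/N.\]
Hence $Y'\in\cA_N$. The only step needing care is the irreducibility of the product $Y'$: the witnesses of the two factors must be combined, and the level $k$ used for $Y_0$ must be kept separate from the arbitrary level $l'$ used for $V_\delta$. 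This works because $V_\delta$ is irreducible at all levels with the same witness.
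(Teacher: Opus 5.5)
Your proof is correct and follows essentially the same route as the paper. You approximate by some $Y_0\in\cS_{\cB^\ast_n}$, replace the tail coordinates by $V_\delta$ to form $\pi_k(Y_0)\times V_\delta$, note that $M_k(u,v)=P_k(\theta(v))$, and conclude via (\ref{equation:Lip}), (\ref{equation:ame}) and (\ref{equation:theta}). The only differences are that you spell out the irreducibility of the product, which the paper merely asserts, and that you work with a fixed neighbourhood rather than the sequence $Y_m\to Y$.
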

 \begin{proof}
 Let $\cO\subset \Phi$ be any non-empty open set.
 Pick $Y\in \cO \cap \cS_{\cB^\ast_n}$ for some $n\in \IN$.
 It suffices to show that $Y$ lies in the closure of $\cA_N$.
 
 Fix $\delta>0$ and $m_0 \geq N$ with $\delta + 2/m_0 <1/N$, and choose $R$ as in Lemma \ref{Lem:irr}, so that $V_\delta \in \cS_{\cB^\ast_{2R+1}}$. 
For $m\in \IN$, we identify $\{0, 1\}^\IN$ with $\{0, 1\}^m \times \{0, 1\}^\IN$ by the map $x \mapsto (p_m(x), \varsigma^{m}(x))$,
where $p_m \colon \{0, 1\}^\IN\rightarrow \{0, 1\}^m$ denotes the coordinate projection.

Set $Y_m:=\pi_m(Y) \times V_\delta$.
Since both $Y$ and $V_\delta$ belong to $\cS_{\cB^\ast_L}$ with $L:=\max\{n, 2R+1\}$,
we have $Y_m \in \cS_{\cB^\ast_L} \subset \Phi$ for all $m\in \IN$.
By definition, $\lim_{m\rightarrow \infty} Y_m = Y$.
Observe that for any $(y_0, v)\in \pi_m(Y) \times V_\delta=Y_m$, one has
$M_m(y_0, v)=P_m(\theta(v))$.
Thus, by (\ref{equation:Lip}), (\ref{equation:ame}) and (\ref{equation:theta}), we obtain $Y_m\in \cA_N$ for every $m\geq m_0$.
This completes the proof.
 \end{proof}

\subsection*{Acknowledgements}
The author is grateful to Professor Narutaka Ozawa for his kind guidance in using GPT-6 Astra (OpenAI).

\subsection*{Funding}
The author was supported by JSPS KAKENHI Grant-in-Aid for Scientific Research (C) 26K06820
and the Distinguished Researcher Grant of Hokkaido University.
\subsection*{AI statement}
GPT-6 Astra (OpenAI) proposed a proof, including the idea of using \cite{FSZ}, a key reference of which the author had previously been unaware.
AI tools were also used for language editing. 
The author reorganized the proof, wrote the exposition, verified all arguments, and takes full responsibility for the content.

\end{document}